\newtheorem{theorem}{theorem}[section]
\newtheorem{thm}[theorem]{Theorem}
\newtheorem{lem}[theorem]{Lemma}
\newtheorem{exmp}[theorem]{Example}
\newtheorem{rmk}[theorem]{Remark}
\begin{document}

\title{\textbf{Counting homotopy classes of mappings via Dijkgraaf-Witten invariants}}
\author{\Large Haimiao Chen
\footnote{Email: \emph{chenhm@math.pku.edu.cn}}\\
\normalsize \em{Beijing Technology and Business University, Beijing, China}}
\date{}
\maketitle

\begin{abstract}
Suppose $\Gamma$ is a finite group acting freely on $S^{n}$ ($n\geqslant 3$ being odd) and $M$ is any closed oriented $n$-manifold. We show that,
given an integer $k$, the set $\deg^{-1}(k)$ of based homotopy classes of mappings with degree $k$ is finite and its cardinality depends only on the congruence class of $k$ modulo $\#\Gamma$; moreover, $\#\deg^{-1}(k)$ can be expressed in terms of the Dijkgraaf-Witten invariants of $M$.
\end{abstract}

\textbf{key words:} homotopy class, degree, topological spherical space form, Dijkgraaf-Witten invariant.

\textbf{MSC2010:} 55M25, 55S35, 57R19, 57R56.

\section{Introduction}


The topic of nonzero degree mappings between manifolds has a history of longer than 20 years. For a 3-manifold $M$, the homotopy set $[M,\mathbb{RP}^{3}]$ is important in the study of Lorentz metrics on a space-time model in which $M$ corresponds to the space part (c.~f.~\cite{lens}), and in this context the existence of a degree-one mapping of any quotient of $S^{3}$ by a free action of a finite group to $\mathbb{RP}^{3}$ was completely determined by Shastri-Zvengrowski \cite{types}. The existence of degree-one mappings from 3-manifolds $M$ to lens spaces was studied by Legrand-Wang-Zieschang \cite{lens} and some conditions were obtained and expressed in terms of the torsion part of $H_{1}(M)$. Based on their work of computing cohomology ring, Bryden-Zvengrowski \cite{Seifert} gave sufficient and necessary conditions for the existence of degree-one mappings from Seifert 3-manifolds to lens spaces in terms of geometric data.
As for another problem, the set of self-mapping degrees of 3-manifolds in Thurston's picture are all determined by Sun-Wang-Wu-Zheng\cite{self}. In high dimension, degrees of mappings between $(n-1)$-connected $2n$-manifolds were studied in Ding-Pan\cite{general2}, Duan-Wang \cite{general} and Lee-Xu \cite{general3}.
For other problems, see \cite{geometric, geometric2} and the references therein.

\medskip

In this article we focus on mappings from a closed oriented $n$-manifold $M$ (with $n\geqslant 3$ odd) to a \emph{topological spherical space form} $S^{n}/\Gamma$ where $\Gamma$ is a finite group with an orientatation-preserving free action on $S^{n}$.
Nowadays we have a fairly good understanding of such $\Gamma$ in general and a complete classification of the spaces $S^{n}/\Gamma$ when $n=3$. An important achievement of geometric topology is the characterization of finite group $\Gamma$ which can act freely on some $S^{n}$ by Madsen-Thomas-Wall \cite{MTW}: a finite group $\Gamma$ can act freely on some $S^{n}$ if and only if $\Gamma$ satisfies the $p^{2}$- and $2p$-conditions for all prime $p$, i.~e.~all subgroups of $\Gamma$ of order $p^{2}$ or $2p$ are cyclic. In dimension $3$, a more concrete classification is known: by the work of Perelman, all $3$-dimensional topological spherical space forms are spherical space forms, i.~e.~$\Gamma < SO(4)$ acting on $S^{3}$ by isometries; and the determination of $3$-dimensional spherical space forms is a classical result due to Threlfall-Seifert and Hopf in the 1930's (\cite{Hopf, ThS}). Besides the cyclic group case (where $S^{3}/\Gamma$ is a lens space), there are four cases: the dihedral case, the tetrahedral case, the octahedral case and the icosahedral case (where the Poincar\' e $3$-sphere is an example). For more details see \cite{Volk1, Volk2}.

It is a classical result of \cite{degree} that the homotopy classes of mappings $f:M\rightarrow S^{n}/\Gamma$ are classified by the degree $\deg f$ and the induced homomorphism $\pi_{1}(f):\pi_{1}(M)\rightarrow\Gamma$. We find that the based homotopy set $[M,S^{n}/\Gamma]$ has a nice structure, namely, it is an affine set modeled on $\mathbb{Z}$. Given an integer $k$, the subset of homotopy classes of mappings with degree $k$ is a finite set and its cardinality can be expressed in terms of the Dijkgraaf-Witten invariants of $M$.

Dijkgraaf-Witten theory was first proposed in \cite{DW} by the two authors naming the theory, as a 3-dimensional
topological quantum field theory. Later it was generalized to any dimension by Freed
\cite{Freed}. Recall that for a given finite group $\Gamma$ and a cohomology class $[\omega]\in H^{n}(B\Gamma;U(1))$, the \emph{Dijkgraaf-Witten invariant} of a closed $n$-manifold $M$ is defined as
$$
Z^{[\omega]}(M)=\frac{1}{\#\Gamma}\cdot\sum\limits_{\phi:\pi_{1}(M)\rightarrow\Gamma}\langle [\omega],f(\phi)_{\ast}[M]\rangle,
$$
where $f(\phi)_{\ast}:H_{n}(M;\mathbb{Z})\rightarrow H_{n}(B\Gamma;\mathbb{Z})$ is induced by the associated mapping $f(\phi):M\rightarrow B\Gamma$ which is unique up to based homotopy. The TQFT axioms enable us to compute $Z^{[\omega]}(M)$ by a cut-and-paste process.

\section{Mappings to topological spherical space forms}

We assume that all manifolds are closed, oriented and equipped with a base-point, and all mappings and homotopies are base-point preserving.

From now on we suppose $\Gamma$ acts freely on $S^{n}$ and denote $m=\# \Gamma$.
The classifying space $B\Gamma$ can be obtained by attaching higher dimensional cells to $S^{n}/\Gamma$ to kill homotopy groups.
Let $\varphi:S^{n}/\Gamma\rightarrow B\Gamma$ denote the inclusion map, and let $p:S^{n}\rightarrow S^{n}/\Gamma$ denote the covering map.

\begin{lem} \label{lem}
Suppose $M$ is an $n$-manifold, and $f_{0},f_{1} \colon M\rightarrow S^{n}/\Gamma$ are two mappings such that
$\pi_{1}(f_{0})=\pi_{1}(f_{1}):\pi_{1}(M)\rightarrow\Gamma$, then $\deg f_{0}\equiv\deg f_{1}\pmod{m}$. If furthermore $\deg f_{0}=\deg f_{1}$, then $f_{0}$ is homotopic to $f_{1}$.
\end{lem}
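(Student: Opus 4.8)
The plan is to push everything into the classifying space $B\Gamma$ and then run obstruction theory, using that $S^{n}/\Gamma$ has vanishing homotopy in the range $2\leqslant i\leqslant n-1$ (its universal cover is $S^{n}$) and that the deck action on $\pi_{n}(S^{n}/\Gamma)\cong\pi_{n}(S^{n})\cong\mathbb{Z}$ is trivial, since it is orientation preserving.

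For the congruence: since $B\Gamma$ is a $K(\Gamma,1)$, based homotopy classes of based maps $M\to B\Gamma$ are classified by the induced homomorphism $\pi_{1}(M)\to\Gamma$; as $\varphi$ induces the identity on $\pi_{1}$ and $\pi_{1}(f_{0})=\pi_{1}(f_{1})$, the maps $\varphi\circ f_{0}$ and $\varphi\circ f_{1}$ are homotopic, so they agree on $H_{n}(-;\mathbb{Z})$. I would then observe that $B\Gamma$ is obtained from $S^{n}/\Gamma$ by attaching one $(n+1)$-cell along the covering map $p$ — which generates $\pi_{n}(S^{n}/\Gamma)\cong\pi_{n}(S^{n})$ because $n\geqslant 2$ — together with cells of dimension $\geqslant n+2$; since $p_{\ast}[S^{n}]=m\,[S^{n}/\Gamma]$ (the covering has degree $m$; use the transfer), this gives $H_{n}(B\Gamma;\mathbb{Z})\cong\mathbb{Z}/m$ with $\varphi_{\ast}\colon H_{n}(S^{n}/\Gamma;\mathbb{Z})\to H_{n}(B\Gamma;\mathbb{Z})$ the reduction modulo $m$. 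Evaluating $(\varphi\circ f_{i})_{\ast}$ on $[M]$ yields $\deg f_{i}\bmod m$ on both sides, hence $\deg f_{0}\equiv\deg f_{1}\pmod{m}$.

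For the homotopy, assume $\deg f_{0}=\deg f_{1}$ and fix a CW structure on $M$ (so $\dim M=n$). I would first homotope $f_{1}$ until it agrees with $f_{0}$ on $M^{(n-1)}$: over $M^{(1)}$ this works because based maps of a connected $1$-complex into $S^{n}/\Gamma$ are classified up to based homotopy by the induced map on $\pi_{1}$, and over $M^{(i)}$ for $2\leqslant i\leqslant n-1$ the relevant obstruction cochain takes values in $\pi_{i}(S^{n}/\Gamma)=\pi_{i}(S^{n})=0$. Once $f_{0}=f_{1}$ on $M^{(n-1)}$, the difference cochain defines a class $d(f_{0},f_{1})\in H^{n}(M;\pi_{n}(S^{n}/\Gamma))=H^{n}(M;\mathbb{Z})$ — the coefficients are untwisted, by the triviality of the deck action on $\pi_{n}(S^{n})$ — and because $\dim M=n$ this is the \emph{only} remaining obstruction, so $d(f_{0},f_{1})=0$ implies $f_{0}\simeq f_{1}$ (rel $M^{(n-2)}$, hence rel the base-point). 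The point then is to identify $d(f_{0},f_{1})$ with the degree: writing $\mu\in H^{n}(S^{n}/\Gamma;\mathbb{Z})$ for the generator dual to $[S^{n}/\Gamma]$, the difference-cochain formula gives $f_{0}^{\ast}\mu-f_{1}^{\ast}\mu=\widehat{\mu}_{\ast}\,d(f_{0},f_{1})$, where $\widehat{\mu}\colon\pi_{n}(S^{n}/\Gamma)\to\mathbb{Z}$, $\widehat{\mu}(\alpha)=\langle\mu,h(\alpha)\rangle$, with $h$ the Hurewicz homomorphism. Since $h$ carries a generator of $\pi_{n}(S^{n}/\Gamma)$ to $m\,[S^{n}/\Gamma]$ (the same computation as before), $\widehat{\mu}$ is multiplication by $m$, so evaluating on $[M]$ gives $\deg f_{0}-\deg f_{1}=m\,\langle d(f_{0},f_{1}),[M]\rangle$; as $\deg f_{0}=\deg f_{1}$ and evaluation on $[M]$ is injective on $H^{n}(M;\mathbb{Z})\cong\mathbb{Z}$, we conclude $d(f_{0},f_{1})=0$ and $f_{0}\simeq f_{1}$. (If one prefers to avoid the general difference formula: for odd $n\geqslant 3$ one may choose a handle, hence CW, decomposition of $M$ with a single $n$-cell; gluing $f_{0}$ and $f_{1}$ along their common restriction to the boundary of that cell gives a map $S^{n}\to S^{n}/\Gamma$ of degree $\deg f_{0}-\deg f_{1}=0$, which is null-homotopic because the degree-$n$ Hurewicz map of $S^{n}/\Gamma$ is injective, and the difference construction turns this null-homotopy into the required homotopy.)

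The routine portion is the skeleton-by-skeleton extension; the substance is in the final step — verifying that the difference class is the \emph{sole} obstruction and that it pairs with $[M]$ to give $(\deg f_{0}-\deg f_{1})/m$ — which hinges on the Hurewicz computation $h(\text{generator})=m\,[S^{n}/\Gamma]$ and on the orientation-preserving hypothesis (needed for the coefficient system $\pi_{n}(S^{n}/\Gamma)$ to be untwisted). The identification $H_{n}(B\Gamma;\mathbb{Z})\cong\mathbb{Z}/m$ with $\varphi_{\ast}$ the reduction map, though standard, is the other step that must be handled with care.
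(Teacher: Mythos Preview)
Your argument is correct. Both proofs rest on the same skeleton---homotope through $B\Gamma$, reduce to the $(n-1)$-skeleton, then measure the discrepancy on the top cell---but the packaging differs. The paper handles both conclusions at once: it builds an explicit ``difference map'' $g\colon S^{n}=\partial(D^{n}\times[0,1])\to S^{n}/\Gamma$ from $f_{0},f_{1}$ and a partial homotopy $h'$ on $K\times[0,1]$ (obtained by cellular approximation from a homotopy in $B\Gamma$), observes $g\cdot f_{0}\simeq f_{1}$ so that $\deg g=\deg f_{1}-\deg f_{0}$, and then lifts $g$ to $\overline{g}\colon S^{n}\to S^{n}$ to get $\deg g=m\deg\overline{g}$; this single equation yields the congruence, and when the degrees agree it makes $\overline{g}$ (hence $g$) null-homotopic, giving $f_{0}\simeq f_{1}$. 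You instead separate the two claims: the congruence comes from pushing forward to $H_{n}(B\Gamma;\mathbb{Z})\cong\mathbb{Z}/m$, and the homotopy from the vanishing of the difference class $d(f_{0},f_{1})$ via the formula $f_{0}^{\ast}\mu-f_{1}^{\ast}\mu=\widehat{\mu}_{\ast}\,d(f_{0},f_{1})$. Your route invokes a bit more machinery (the general difference-cochain formula) but is more explicit about where each hypothesis enters---notably the orientation-preserving assumption that untwists the coefficient system. Your parenthetical alternative (single top cell, glue to get a map $S^{n}\to S^{n}/\Gamma$, null-homotopic when the degree difference is zero) is exactly the paper's argument.
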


\begin{proof}
The composition $\varphi \circ f_{0}$ is homotopic to $\varphi \circ f_{1}$ since $\pi_{1}(f_{0})=\pi_{1}(f_{1}):\pi_{1}(M)\rightarrow\Gamma$; choose a homotopy
$h:M\times[0,1]\rightarrow B\Gamma$.
Let $K \subset M$ be the $(n-1)$-skeletion of $M$, then $M=K \cup D^{n}$. By cellular approximation, we may find $h':K\times[0,1]\rightarrow S^{n}/\Gamma$ such that $\varphi\circ h'\simeq h|_{K\times[0,1]}:K\times[0,1]\rightarrow B\Gamma$, and $h'|_{K\times\{i\}}=f_{i}$ for $i=0,1$.

From $f_{0},f_{1}$ and $h'$ we can construct a mapping
$$g \colon S^{n}=\partial (D^{n} \times [0,1]) \hookrightarrow M\times\{0,1\}\cup K \times [0,1] \to S^{n}/\Gamma.$$
Let $g\cdot f_{0}$ denote the composite
$$M\cong S^{n}\# M\rightarrow S^{n}\vee M\stackrel{\mathrm{g\vee f_{0}}}{\longrightarrow}S^{n}/\Gamma.$$
Then $g\cdot f_{0}\simeq f_{1}$, hence $\deg g=\deg f_{1} - \deg f_{0}$.

Clearly $g$ lifts to a mapping $\overline{g} \colon S^{n} \to S^{n}$ and $\deg g=m\deg \overline{g}$.
\end{proof}

\medskip

From the proof we see that there is a free action of $[S^{n},S^{n}/\Gamma]\cong\mathbb{Z}$ on $[M,S^{n}/\Gamma]$. By cellular approximation, each mapping $M\rightarrow B\Gamma$ is homotopic to a mapping $M\rightarrow S^{n}/\Gamma$. Summarizing, we have a diagram
$$\xymatrix{
&[M,S^{n}/\Gamma]\ar[d]^{\deg}\ar[r]^{\pi_{1}}&\hom(\pi_{1}(M),\Gamma) \\
&\mathbb{Z}}
$$
such that
\begin{itemize}
\item The map $\pi_{1}$ is surjective, and $\deg\times\pi_{1}:[M,S^{n}/\Gamma]\rightarrow\mathbb{Z}\times\hom(\pi_{1}(M),\Gamma)$ is injective.
\item For any $\phi\in\hom(\pi_{1}(M),\Gamma)$, the set $\deg(\pi_{1}^{-1}(\phi))$ is a congruence class modulo $m$.
\end{itemize}

It is well-known that (see Section 1.6 of \cite{cohomology}, for instance), by splicing the acyclic $\mathbb Z \Gamma$ complex
$$0 \to \mathbb Z \to C_{n}(S^{n}) \to \cdots \to C_{0}(S^{n}) \to \mathbb Z \to 0$$
we obtain a periodic resolution of $\mathbb Z$ as a trivial $
\mathbb Z \Gamma$-module and hence $H_{n}(B\Gamma ; \mathbb Z)\cong\mathbb Z/m\mathbb Z$ with a preferred generator coming from the fundamental class of $S^{n} / \Gamma$. By the Universal Coefficient Theorem,
$$
H^{n}(B\Gamma;U(1))\cong\hom(H_{n}(B\Gamma;\mathbb{Z}),U(1)) \cong \mathbb Z /m\mathbb Z$$
and the pairing $H^{n}(B\Gamma;U(1))\times H_{n}(B\Gamma;\mathbb{Z})\rightarrow U(1)$ is given by
\begin{align}
\langle\overline{l},\overline{k}\rangle=\zeta_{m}^{kl},
\end{align}
where
\begin{align}
\zeta_{m}=\exp(\frac{2\pi i}{m}).
\end{align}

\begin{thm} \label{thm}
Each homotopy set $\deg^{-1}(k)$ is finite.
More precisely,
$$
\#\deg^{-1}(k)=\sum\limits_{\overline{l}\in\mathbb{Z}/m\mathbb{Z}}Z^{\overline{l}}(M)\cdot\zeta_{m}^{-kl}.
$$
\end{thm}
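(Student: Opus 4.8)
The plan is to compute $\#\deg^{-1}(k)$ by summing over the fibers of the map $\pi_1$ and then applying Fourier inversion on the cyclic group $\mathbb{Z}/m\mathbb{Z}$. First I would observe that, by the structure diagram established above, $\deg^{-1}(k)$ is partitioned as $\coprod_{\phi\in\hom(\pi_1(M),\Gamma)} \big(\deg^{-1}(k)\cap\pi_1^{-1}(\phi)\big)$, and by Lemma \ref{lem} each piece $\deg^{-1}(k)\cap\pi_1^{-1}(\phi)$ has at most one element: it is nonempty precisely when $k$ lies in the congruence class $\deg(\pi_1^{-1}(\phi))\subset\mathbb{Z}/m\mathbb{Z}$. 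Thus $\#\deg^{-1}(k)$ equals the number of homomorphisms $\phi$ for which the associated map $f(\phi):M\to B\Gamma$ realizes degree $\equiv k\pmod m$ in the sense that the preferred generator of $H_n(B\Gamma)\cong\mathbb{Z}/m\mathbb{Z}$ is hit with coefficient $k$ by $f(\phi)_*[M]$.

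Next I would encode the condition "$f(\phi)_*[M]=\overline{k}\in H_n(B\Gamma;\mathbb{Z})$" using the orthogonality relations for characters of $\mathbb{Z}/m\mathbb{Z}$. Writing $f(\phi)_*[M]=\overline{k(\phi)}$, the indicator of the event $k(\phi)\equiv k$ is
$$
[\,k(\phi)\equiv k\,]=\frac{1}{m}\sum_{\overline{l}\in\mathbb{Z}/m\mathbb{Z}}\exp\!\Big(\frac{2\pi i}{m}l\,(k(\phi)-k)\Big).
$$
Summing this over all $\phi\in\hom(\pi_1(M),\Gamma)$ and interchanging the two finite sums gives
$$
\#\deg^{-1}(k)=\frac{1}{m}\sum_{\overline{l}\in\mathbb{Z}/m\mathbb{Z}}\exp\!\Big(-\frac{2\pi i}{m}kl\Big)\sum_{\phi}\exp\!\Big(\frac{2\pi i}{m}l\,k(\phi)\Big).
$$
The inner sum over $\phi$ is, by the displayed formula for the pairing $\langle\overline{l},\overline{k(\phi)}\rangle=\exp(\frac{2\pi i}{m}l\,k(\phi))$ and the definition of the DW invariant recalled in the introduction, exactly $\#\Gamma\cdot Z^{\overline{l}}(M)=m\cdot Z^{\overline{l}}(M)$. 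Substituting yields the claimed identity, and finiteness of $\deg^{-1}(k)$ is immediate since $\hom(\pi_1(M),\Gamma)$ is finite (as $\pi_1(M)$ is finitely generated and $\Gamma$ is finite).

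The one point that needs care — the main obstacle — is the precise identification between the homotopy-theoretic degree $\deg f$ of a map $f:M\to S^n/\Gamma$ and the coefficient $k(\phi)$ of $f(\phi)_*[M]$ against the preferred generator of $H_n(B\Gamma;\mathbb{Z})$, where $\phi=\pi_1(f)$ and $f(\phi)\simeq\varphi\circ f$. This is where the "preferred generator coming from the fundamental class of $S^n/\Gamma$" must be used: the inclusion $\varphi:S^n/\Gamma\to B\Gamma$ sends $[S^n/\Gamma]$ to that generator, and $\varphi\circ f$ has the same effect on $H_n$ as $f$ followed by $\varphi_*$, so $(\varphi\circ f)_*[M]=(\deg f)\cdot\varphi_*[S^n/\Gamma]$, giving $k(\phi)\equiv\deg f\pmod m$. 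I would also note for consistency that this is compatible with Lemma \ref{lem}, which already tells us $\deg f\bmod m$ depends only on $\pi_1(f)$. Once this identification is spelled out, the character-sum computation above is entirely routine, so I would present the proof as: (i) reduce to counting $\phi$ with the right congruence class via Lemma \ref{lem}; (ii) identify the congruence class with $f(\phi)_*[M]$; (iii) apply Fourier inversion and recognize the DW invariants.
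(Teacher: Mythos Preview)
Your proposal is correct and follows essentially the same route as the paper: establish the bijection $\deg^{-1}(k)\leftrightarrow\{\phi:\pi_{1}(M)\to\Gamma\colon f(\phi)_{*}[M]=\overline{k}\}$ from the structure diagram and Lemma~\ref{lem}, then invert the relation between $\#\deg^{-1}(k)$ and $Z^{\overline{l}}(M)$ by Fourier transform on $\mathbb{Z}/m\mathbb{Z}$. Your write-up is in fact more explicit than the paper's, which simply records the bijection, writes $Z^{\overline{l}}(M)$ as a sum over $\overline{k}$, and invokes ``Fourier transformation'' without spelling out the character sum or the identification $k(\phi)\equiv\deg f\pmod m$ via $\varphi_{*}[S^{n}/\Gamma]$.
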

\begin{proof}
There is a one-to-one correspondence
$$\deg^{-1}(k)\leftrightarrow\{\phi:\pi_{1}(M)\rightarrow\Gamma\colon f(\phi)_{\ast}[M]=\overline{k}\},$$
hence
\begin{align*}
\#\deg^{-1}(k)&= \#\{\phi:\pi_{1}(M)\rightarrow\Gamma\colon f(\phi)_{\ast}[M]=\overline{k}\}, \\
Z^{\overline{l}}(M)&= \frac{1}{m}\cdot\sum\limits_{\overline{k}\in\mathbb{Z}/m\mathbb{Z}}\#\deg^{-1}(k)\cdot\zeta_{m}^{kl}.
\end{align*}
The formula for $\#\deg^{-1}(k)$ is obtained by Fourier transformation.
\end{proof}

\begin{rmk}
\rm In \cite{linking} Murakami-Ohtsuki-Okada defined an invariant of 3-manifold which can be shown to depend only on $\beta_{1}(M)$ (the first Betti number of $M$) and the linking paring $\lambda$ on $\textrm{Tor}H_{1}(M)$. They proved a formula expressing the DW invariant $Z^{\overline{l}}(M)$ (for a cyclic group) in terms of their invariant. Combining this with our result, we see that in dimension $3$ there is an indirect connection from $\beta_{1}(M)$ and $\lambda$ to $\#\deg^{-1}(k)$, and the result of \cite{lens} provided an evidence for the case $k=1$.
\end{rmk}

\begin{exmp}
\rm Let us consider the special case when $n=3$, $M$ is the Seifert 3-manifold with orientable base $M_{O}(g;(a_{1},b_{1}),\cdots,(a_{r},b_{r})))$ and $\Gamma=\mathbb{Z}/m\mathbb{Z}$. The Dijkgraaf-Witten invariants of such manifolds are computed by the first author in \cite{chm}. By Theorem 3.3 of \cite{chm},
\begin{align}
Z^{\overline{l}}(M)=m^{2g-2}\sum\limits_{h,s\in\mathbb{Z}/m\mathbb{Z}}\prod\limits_{j=1}^{r}\left(\sum\limits_{z_{j}
    \in\mathbb{Z}/m\mathbb{Z}\atop a_{j}z_{j}=h}\zeta_{m^{2}}^{\tilde{l}a_{j}b_{j}\tilde{z_{j}}^{2}-(2\tilde{l}\tilde{h}+m\tilde{s})b_{j}\tilde{z_{j}}}\right),
\end{align}
where $x\mapsto\tilde{x}$ denotes the obvious map $\mathbb{Z}/m\mathbb{Z}\rightarrow\{0,1,\cdots,m-1\}$.

Thus
\begin{align}
&\#\deg^{-1}(k) \nonumber \\
=&m^{2g-2}\sum\limits_{l,h,s\in\mathbb{Z}/m\mathbb{Z}}\zeta_{m}^{-kl}\prod\limits_{j=1}^{r}\left(\sum\limits_{z_{j}
\in\mathbb{Z}/m\mathbb{Z}\atop a_{j}z_{j}=h}\zeta_{m^{2}}^{\tilde{l}a_{j}b_{j}\tilde{z_{j}}^{2}-(2\tilde{l}\tilde{h}+m\tilde{s})b_{j}\tilde{z_{j}}}\right).
\end{align}
\end{exmp}

\medskip

{\bf Acknowledgements}\\
I would like to express my special gratitude to Professor Yang Su at Institute of Mathematics, Chinese Academic of Science, for many beneficial conversations.

\end{document}